\numberwithin{equation}{section}
\newtheorem{thm}{Theorem}[section]
\newtheorem{cor}[thm]{Corollary}
\newtheorem{lem}[thm]{Lemma}
\newtheorem{pro}[thm]{Proposition}
\newtheorem*{thm*}{Theorem}
\theoremstyle{remark}
\newtheorem{rem}[thm]{Remark}
\newtheorem*{opq}{Question}
\theoremstyle{definition}
\def\N{\mathbb{N}}
\def\oo{\mathcal O}
\DeclareMathOperator{\D}{d\!}
\DeclareMathOperator{\lin}{lin}
\def\ee{\mathcal{E}}
\def\ff{\mathcal{F}}
\def\hh{\mathcal H}
\newcommand\miu[1]{\mu\circ\phi^{-1}(#1)}
\newcommand*{\cbb}{\mathbb C}
\newcommand*{\dz}[1]{{\EuScript D}(#1)}
\newcommand*{\Ge}{\geqslant}
\newcommand*{\hsf}{\mathsf h}
\newcommand*{\is}[2]{\langle#1,#2\rangle}
\newcommand*{\Le}{\leqslant}
\newcommand*{\zbb}{\mathbb Z}
\begin{document}
\title[On unbounded composition operators in $\ell^2$-spaces]{On normal, formally normal and quasinormal composition operators in $\ell^2$-spaces}
\author[P.\ Budzy\'{n}ski]{Piotr Budzy\'{n}ski}
\address{Katedra Zastosowa\'{n} Matematyki, Uniwersytet Rolniczy w Krakowie, ul.\ Balicka 253c, 30-198 Krak\'ow, Poland}
\email{piotr.budzynski@ur.krakow.pl}
\thanks{The research was partially conducted when the author served as a Research Fellow at the Mathematical Institute of the Polish Academy of Sciences}
\subjclass[2010]{Primary 47B33, 47B20; Secondary 47A05}
\keywords{Composition operator in $L^2$-space, normal operator, quasinormal operator, formally normal operator}

\begin{abstract}
Unbounded composition operators in $L^2$-space over discrete measure spaces are investigated.
Normal, formally normal and quasinormal composition operators acting in $L^2$-spaces of this kind are characterized.
\end{abstract}

\maketitle
\section{Introduction}
Composition operator on $L^2$-spaces, which has been introduced during the process of formalization of classical mechanics, appear in many areas of mathematics. Theory of these operators turned out to be fruitful and is well-developed now. Many properties of bounded composition operators obtained elegant measure theoretic characterizations (see a monograph \cite{sin-man} and references therein). The motivation for a research on unbounded composition operators is two-fold. Firstly, this class of operators is known to contain examples which have very interesting properties, sometimes quite surprising (cf.\ \cite{jab-jun-sto-Triv, jab-jun-sto-Stiel, bud, bud-jab-jun-sto-Comp}). Secondly, the theory of these operators is relatively young and, although it draw more attention recently (cf.\ \cite{cam-hor, jab, bud-jab-jun-sto-Comp, bud-jab-jun-sto-Subn, bud-dym-jab-sto}), there is a plenty of room for development.

In the following note we address the question of normality, formal normality and quasinormality of unbounded composition operators acting in an $L^2$-spaces over discrete measure spaces (in further parts of the paper we refer to such spaces as $\ell^2$-spaces). These issues were considered in a general $L^2$-space setting in \cite{bud-jab-jun-sto-Comp}. Naturally, some information is lost then. We fill up this gap to some extent in the present paper. The $\ell^2$-space setting is very convenient when approaching many problems concerning unbounded composition operators in $L^2$-spaces because of two reasons.  The first is that in this setting the proofs are rather elementary and do not require advanced measure theoretic methods. The second is that the results give an insight into what happens in a general setting (from this point of view the present paper can be seen as an addendum to \cite{bud-jab-jun-sto-Comp}).
\section{Prerequisites}
In all what follows $\zbb$, $\zbb_+$ and $\cbb$ stand for the sets of integers, nonnegative integers and complex numbers, respectively. Given a set $X$, by $\ee=\ee_X$ we denote the family $\{\chi_{\{x\}}\colon x\in X\}$; as usual, $\chi_\omega$ denotes the characteristic function of the a set $\omega$. If $\ff$ is a subset of a Hilbert space, then $\lin \ff$ denotes the linear span of $\ff$.

Let $\hh$ be a (complex) Hilbert space and let $A$ be a (linear) operator $A$ acting in $\hh$. Then $\dz{A}$ denotes the domain of $A$ and $A^*$ stands for the adjoint $A$ (if it exists).
If $A$ is closed and densely defined, then we call $A$ {\em normal} if and only if $A^*A=AA^*$. If $A$ is densely defined, $\dz{A}\subseteq \dz{A^*}$ and $\|A f\|= \|A^* f\|$ for every $f\in\dz{A}$, then $A$ is {\em formally normal}. $A$ is said to be {\em quasinormal} if $A$ is densely defined and $U|A|\subseteq|A|U$, where $A=U|A|$ is its polar decomposition.

Let $(X,\varSigma,\mu)$ be a $\sigma$-finite measure space and let $\phi\colon X\to X$ be a $\varSigma$-measurable mapping. Define
\begin{align*}
    \mu\circ\phi^{-1}(\varDelta)=\mu\Big(\phi^{-1}(\varDelta)\Big),\quad \varDelta\in \varSigma.
    \end{align*}
Suppose that $\phi$ is {\em nonsingular}, i.e., the measure $\mu\circ \phi^{-1}$ is absolutely continuous with respect to $\mu$. Then the so-called {\em composition operator}
    $$C_\phi\colon L^2(\mu) \supseteq\dz{C_\phi} \to L^2(\mu),$$
given by the formula
    \begin{align*}
    \dz{C_\phi}=\Big\{f \in L^2(\mu) \colon f\circ \phi \in L^2(\mu)\Big\},\quad C_\phi f=f\circ\phi,\quad f\in \dz{C_\phi},
   \end{align*}
is well-defined linear operator in $L^2(\mu)$ (as usual, $L^2(\mu)$ stands for the Hilbert space of all complex-valued and square-integrable with respect to $\mu$ functions on $X$). The reverse is also true, i.e., if $\phi\colon X\to X$ is $\varSigma$-measurable mapping such that the map $f\mapsto f\circ \phi$ defines a linear operator in $L^2(\mu)$, then $\phi$ must be nonsingular. The operator $C_\phi$ is automatically closed.
Properties of $C_\phi$ are usually written in terms of the canonical Radon-Nikodym derivative $\hsf_\phi$ associated to $C_\phi$ and given by
    \begin{align*}
    \hsf_\phi=\frac{\D\mu\circ\phi^{-1}}{\D\mu}.
    \end{align*}
In particular, $C_\phi$ is bounded if and only if $\hsf_\phi$ is essentially bounded. For more information about bounded and unbounded composition operators the reader is referred to \cite{sin-man,nor} and \cite{bud-jab-jun-sto-Comp}.

In this paper we deal with composition operators acting in $L^2$-spaces over a discrete measure spaces. To establish the notation and terminology let us consider a countable set $X$.
By $2^X$ we denote the power set of $X$.  Let $\mu$ be a (positive) $\sigma$-finite measure on $2^X$ (note that $\sigma$-finiteness of $\mu$ is equivalent to  $\mu(x)<\infty$ for all $x\in X$; for brevity, we write $\mu(x)$ instead a more formal expression $\mu(\{x\})$). In this context we write
$\ell^2(\mu)$ instead to $L^2(\mu)$ and when mentioning $\ell^2$-space we mean $\ell^2(\mu)$ for some measure $\mu$ on $2^X$ with countable $X$.
Let $\phi\colon X\to X$. Clearly, $\phi$ is $2^X$-measurable. Nonsingularity of $\phi$ is equivalent to the following condition
\begin{align}\label{18.04.11.01}
\forall x\in X\quad \mu(x)=0\quad\Longrightarrow\quad\miu{x}=0.
\end{align}
Obviously, if $\mu(x)>0$ for every $x\in X$, then $\phi$ is automatically nonsingular.
If  $\phi$ is nonsingular, then we will write $[\phi]$ for the set of all mappings $\psi\colon X\to X$ such that
$\{x\in X\colon \phi(x)\neq\psi(x)\}\subseteq X\setminus  X_{+}$, where $ X_{+}=\{x\in X\colon \mu(x)\neq0\}$.
Any such a mapping we call a {\em representative} of $[\phi]$. As easily seen, every $\psi\in[\phi]$ is nonsingular and $C_\phi=C_\psi$.
Now, assume that $\phi$ is nonsingular. Set $\hsf_\phi(x)=0$ for $x\in X\setminus X_{+}$. Then we have
    \begin{align}\label{rnatomic}
    \hsf_\phi(x)=\frac{\miu{x}}{\mu(x)},\quad x\in X,
    \end{align}
and\footnote{\;Here, and later, we adhere to the convention that $\sum_{x\in\varnothing}\alpha(x)=0$.}
    \begin{align}\label{17.01.11.01}
    \dz{C_\phi}=\big\{f \in \ell^2(\mu) \colon \sum_{x\in X_{+}}|f(x)|^2\mu\circ\phi^{-1}(x)<\infty\big\}.
   \end{align}
In our context it seems natural to avoid using Radon-Nikodym derivative $\hsf_\phi$, at least explicitly, and express properties of the composition operator $C_\phi$ directly in terms of the measure $\mu$, the transformation $\phi$ and the transport measure $\mu\circ\phi^{-1}$.

Suppose that $\phi\colon X\to X$ is a bijective mapping. For $x\in X$, we define
    \begin{align*}
    \oo_\phi(x)=\{\phi^n(x)\colon n\in \zbb_+\}\cup\{(\phi^n)^{-1}(x)\colon n\in\zbb_+\},
    \end{align*}
where $\phi^k$ denotes $k$-fold composition of transformation with itself and $\phi^0=\mathrm{id}_X$ (the identity mapping on $X$).
We call $\oo_\phi(x)$ {\em the orbit of $\phi$ at $x$}. If no confusion can arise we write $\oo(x)$ instead of $\oo_\phi(x)$.
It is easy to see that the orbits $\oo(x)$ and $\oo(y)$ either have empty intersection or are equal.
Observe also that if $y\in\oo(x)$, then $\oo(x)=\oo(y)$.

Below we gather some useful information about $C_\phi$, $C_\phi^*$ and their products. They can be deduced from \cite{cam-hor, bud-jab-jun-sto-Comp, bud}. We include elementary proofs for completeness and the reader's convenience. Recall that $\ee=\{\chi_{\{x\}}\colon x\in X\}$.
\begin{lem}\label{01.21.04.09}
Let $X$ be a countable set, $\mu$ be a $\sigma$-finite measure on $2^X$ and $\phi\colon X\to X$ be nonsingular. Then the following conditions hold.
    \begin{enumerate}
    \item $\tau(X_+)\subseteq X_+$ for every representative $\tau$ of $[\phi]$ and there exists a representative $\psi$ of $[\phi]$ such that $\psi(x)=x$ for every $x\in X\setminus X_+$.
    \item $\ee\subseteq\dz{C_\phi}$ if and only if the following condition holds$:$
\begin{align}\label{star}
\miu{x}<\infty, \quad x\in  X_{+}.
\end{align}
    \item $C_\phi$ is densely defined if and only if $\ee\subseteq\dz{C_\phi}$.
    \item If \eqref{star} holds, then $\ee\subseteq\dz{C_\phi^*}$.
    \item If \eqref{star} holds, $z\in X$ and $\varDelta\subseteq\phi^{-1}(\{z\})$, then $\chi_\varDelta\in\dz{C_\phi^*}$.
    \item If \eqref{star} holds and $f\in\dz{C_\phi^*}$, then
        \begin{align}\label{02.21.04.09}
            (C_\phi^*f)(x)=
                \sum_{z\in\phi^{-1}(\{x\})}\frac{f(z)\mu(z)}{\mu(x)},\quad x\in  X_{+}.
        \end{align}
    \item If \eqref{star} holds, then $\ee\subseteq\dz{C_\phi^*C_\phi}$ and
        \begin{align}\label{03.21.04.09}
        C_\phi^* C_\phi \chi_{\{x\}}=\frac{\miu{x}}{\mu(x)}\cdot\chi_{\{x\}},\quad x\in  X_{+}.
        \end{align}
    \item If \eqref{star} holds, then $\ee\subseteq\dz{C_\phi C_\phi^*}$ and
        \begin{align}\label{04.21.04.09}
        C_\phi C_\phi^* \chi_{\{x\}}=\frac{\mu(x)}{\mu(\phi(x))}\cdot \chi_{\{\phi(x)\}}\circ \phi,\quad x\in  X_{+} 
        \end{align}
    \end{enumerate}
\end{lem}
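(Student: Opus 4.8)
The plan is to establish the eight items essentially in the displayed order, since each feeds the next. For \emph{(1)}, I would argue by contradiction: if $x\in X_+$ but $\tau(x)\notin X_+$ for some representative $\tau$ of $[\phi]$, then $\mu(\tau(x))=0$, while $x\in\tau^{-1}(\{\tau(x)\})$ forces $\mu\circ\tau^{-1}(\{\tau(x)\})\ge\mu(x)>0$, contradicting nonsingularity of $\tau$ (every representative of $[\phi]$ is nonsingular); the required $\psi$ is simply $\psi=\phi$ on $X_+$ and $\psi=\mathrm{id}_X$ on $X\setminus X_+$, which differs from $\phi$ only off $X_+$. For \emph{(2)}, \eqref{17.01.11.01} shows that for $x\in X_+$ one has $\chi_{\{x\}}\in\dz{C_\phi}$ iff $\miu{x}<\infty$, while $\chi_{\{x\}}$ is the zero vector of $\ell^2(\mu)$ when $x\notin X_+$; hence $\ee\subseteq\dz{C_\phi}$ is precisely \eqref{star}. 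For \emph{(3)}: if $\ee\subseteq\dz{C_\phi}$ then $\lin\ee\subseteq\dz{C_\phi}$, and as $\lin\ee$ is dense in $\ell^2(\mu)$ so is $\dz{C_\phi}$; conversely, if \eqref{star} fails there is $x_0\in X_+$ with $\miu{x_0}=\infty$, and \eqref{17.01.11.01} forces $f(x_0)=0$ for every $f\in\dz{C_\phi}$, so $\dz{C_\phi}$ is orthogonal to the nonzero vector $\chi_{\{x_0\}}$ and $C_\phi$ is not densely defined.

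From here on assume \eqref{star}, so that $C_\phi^*$ exists by (2)--(3). I would prove \emph{(5)} first and deduce \emph{(4)} from it. Given $z\in X$ and $\varDelta\subseteq\phi^{-1}(\{z\})$, note $\mu(\varDelta)\le\miu{z}$, which is finite for $z\in X_+$ and is $0$ for $z\notin X_+$ by nonsingularity, so $\chi_\varDelta\in\ell^2(\mu)$; then for $f\in\dz{C_\phi}$ one computes $\is{C_\phi f}{\chi_\varDelta}=\sum_{x\in\varDelta}f(\phi(x))\mu(x)=f(z)\,\mu(\varDelta)$, a linear functional of $f$ bounded by $\mu(\varDelta)\mu(z)^{-1/2}\|f\|$ when $z\in X_+$ and identically $0$ otherwise, whence $\chi_\varDelta\in\dz{C_\phi^*}$. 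Item (4) is the special case $\varDelta=\{x\}\subseteq\phi^{-1}(\{\phi(x)\})$. For \emph{(6)}, fix $f\in\dz{C_\phi^*}$ and $x\in X_+$; since $C_\phi\chi_{\{x\}}=\chi_{\phi^{-1}(\{x\})}\in\ell^2(\mu)$ by \eqref{star}, the identity $\is{C_\phi\chi_{\{x\}}}{f}=\is{\chi_{\{x\}}}{C_\phi^*f}$ reads $\sum_{z\in\phi^{-1}(\{x\})}\overline{f(z)}\,\mu(z)=\overline{(C_\phi^*f)(x)}\,\mu(x)$, the left-hand series converging absolutely by Cauchy--Schwarz; dividing by $\mu(x)\ne0$ and conjugating gives \eqref{02.21.04.09}.

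For \emph{(7)}, fix $x\in X_+$: then $\chi_{\{x\}}\in\dz{C_\phi}$ by (2), and $C_\phi\chi_{\{x\}}=\chi_{\phi^{-1}(\{x\})}\in\dz{C_\phi^*}$ by (5) with $z=x$, so $\chi_{\{x\}}\in\dz{C_\phi^*C_\phi}$; substituting $f=\chi_{\phi^{-1}(\{x\})}$ into \eqref{02.21.04.09}, for $y\in X_+$ the sum over $z\in\phi^{-1}(\{y\})$ of $\chi_{\phi^{-1}(\{x\})}(z)\mu(z)/\mu(y)$ vanishes unless $y=x$ and equals $\miu{x}/\mu(x)$ when $y=x$, which, $X\setminus X_+$ being $\mu$-null, is \eqref{03.21.04.09}. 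For \emph{(8)}, $\chi_{\{x\}}\in\dz{C_\phi^*}$ by (4), and \eqref{02.21.04.09} with $f=\chi_{\{x\}}$ yields $(C_\phi^*\chi_{\{x\}})(y)=\mu(x)/\mu(\phi(x))$ for $y=\phi(x)$ and $0$ otherwise (here $\mu(\phi(x))\ne0$ by (1)), i.e.\ $C_\phi^*\chi_{\{x\}}=\tfrac{\mu(x)}{\mu(\phi(x))}\chi_{\{\phi(x)\}}$ as an element of $\ell^2(\mu)$; since $\chi_{\{\phi(x)\}}\in\ee\subseteq\dz{C_\phi}$, applying $C_\phi$ shows $\chi_{\{x\}}\in\dz{C_\phi C_\phi^*}$ and gives \eqref{04.21.04.09}.

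None of the individual computations is deep; the points deserving care are the converse direction of (3) (one must exhibit an explicit nonzero vector orthogonal to $\dz{C_\phi}$), the systematic use of item (1) to guarantee that the denominators $\mu(\phi(x))$ occurring in the adjoint formulas are nonzero, and the bookkeeping distinction between a genuine pointwise identity of functions and an identity of classes in $\ell^2(\mu)$ — this is why \eqref{02.21.04.09}, asserted only on $X_+$, nevertheless determines the full identities \eqref{03.21.04.09} and \eqref{04.21.04.09}. I expect the main (modest) obstacle to be carrying the case distinction $x\in X_+$ versus $x\notin X_+$ cleanly through items (2)--(8) rather than any single hard estimate.
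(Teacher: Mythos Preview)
Your proposal is correct and follows essentially the same route as the paper's own proof: the same item-by-item verification using the explicit formula \eqref{17.01.11.01} for the domain, the same inner-product computation for the adjoint, and the same chaining of (2), (5)/(4), (6) to obtain (7) and (8). The only cosmetic differences are that you argue the converse of (3) via the contrapositive (exhibiting $\chi_{\{x_0\}}\perp\dz{C_\phi}$ when $\miu{x_0}=\infty$) whereas the paper argues directly that every $x\in X_+$ lies in some $X_f=\{x:f(x)\neq0\}$ with $f\in\dz{C_\phi}$, and you derive (4) as a special case of (5) rather than proving them in parallel; neither change affects the substance.
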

\begin{proof}
Suppose that $\tau\colon X\to X$ is a mapping such that $C_\tau$ is well defined and $C_\tau=C_\phi$. Then $\tau$ need to satisfy \eqref{18.04.11.01} and this implies that $\tau( X_{+})\subseteq X_{+}$. Values of $\tau$ over the set $X\setminus X_{+}$ do not affect values of $C_\psi$
and hence we may modify $\tau$ on $X\setminus X_{+}$ so as to have $\tau(x)=x$ for $x\in X\setminus X_{+}$.
Clearly, the modified mapping, denote it by $\psi$, is a representative of $\phi$ and satisfies (1).

Let $x\in X_+$. Then, by \eqref{17.01.11.01}, $\chi_{\{x\}}\in\dz{C_\phi}$ if and only if $\miu{x}<\infty$. Evidently, this implies (2).

Clearly, if $\ee\subseteq \dz{C_\phi}$, then $C_\phi$ is densely defined. Now, suppose that $C_\phi$ is densely defined.
For $f\in\dz{C_\phi}$ we set $X_f=\{x\in X_+\colon f(x)\neq 0\}$. Observe that if $x_0\in X_f$, then, by \eqref{17.01.11.01},  $\chi_{\{x_0\}}\in\dz{C_\phi}$ because we have
    \begin{align*}
    |f(x_0)|^2\cdot\sum_{x\in X} |\chi_{\{x_0\}}(x)|^2\miu{x}\Le\sum_{x\in X} |f(x)|^2\miu{x}<\infty.
    \end{align*}
Clearly, $X=\big( X\setminus X_+\big) \cup \bigcup_{f\in\dz{C_\phi}}X_{f}$. Thus $\ee$ is contained in $\dz{C_\phi}$.

By (2) and (3), $C_\phi$ is densely defined and hence $C_\phi^*$ is well-defined. Since
    \begin{align*}
    \big|\langle \chi_{\{x\}}, C_\phi f\rangle\big|
    &=
    \Big|\sum_{z\in X} \chi_{\{x\}}(z)\overline{ f(\phi(z))}\mu(z)\Big|
    =
    \big|f({\phi(x)})\big|\,\mu(x)\\
    &\Le
    \Big(\sum_{z\in X} |f(z)|^2\mu(z)\Big)^{\frac12} \frac{\mu(x)}{\sqrt{\mu(\phi(x))}},\quad x\in X_+,
    \end{align*}
we get that $\ee\subseteq\dz{C_\phi^*}$.

Let $z\in X_+$ and $\varDelta\subseteq\phi^{-1}(\{z\})$. Using \eqref{star} and reasoning as in the proof of (4), i.e.,
    \begin{align*}
    \big|\langle \chi_{\varDelta},C_\phi f\rangle\big|
    \Le
    \sum_{y\in\phi^{-1}(\{z\})}|f(z)|\,\mu(y)
    \Le
    \|f\|\cdot \miu{z},
    \end{align*}
we easily get (5).

For the proof of (6) observe that
    \begin{align*}
    (C_\phi^*f)(x)\, \mu(x)=\langle C_\phi^* f, \chi_{\{x\}}\rangle=\langle f, C_\phi \chi_{\{x\}}\rangle= \sum_{z\in X} f(z) \chi_{\phi^{-1}(\{x\})}(z) \mu(z).
    \end{align*}
for every $f\in\dz{C_\phi^*}$ and $x\in X_+$.

Let $x\in X_+$. By (2), (3) and (5) we know that $\chi_{\{x\}}\in\dz{C_\phi}$ and $\chi_{\phi^{-1}(\{x\})}\in\dz{C_\phi^*}$. This and $C_\phi \chi_{\{x\}}=\chi_{\phi^{-1}(\{x\})}$  implies that
$\chi_{\{x\}}\in\dz{C^*_\phi C_\phi}$. Moreover, for every $y\in X_+$ we have
    \begin{align*}
    \big(C_\phi^*C_\phi \chi_{\{x\}}\big)(y)
    &=
    \big(C_\phi^* \chi_{\phi^{-1}(\{x\})}\big)(y)
    =
    \sum_{z\in\phi^{-1}(\{y\})}\frac{\chi_{\phi^{-1}(\{z\})}(w)\mu(z)}{\mu(y)}
    \end{align*}
which yields \eqref{03.21.04.09}.

Let $x\in X_+$. By (4) we have $\chi_{\{x\}}\in\dz{C_\phi^*}$. Hence \eqref{02.21.04.09} implies
    \begin{align*}
    (C_\phi^*\chi_{\{x\}})(y)
    =
    \sum\limits_{z\in\phi^{-1}(\{y\})}\frac{\chi_{\{x\}}(z)\mu(z)}{\mu(y)}
    =\frac{\mu(x)}{\mu(\phi(x))}\cdot\chi_{\phi(x)} (y),\quad y\in X_+.
    \end{align*}
This, (2)  and (3) imply that $C_\phi^*\chi_{\{x\}}\in\dz{C_\phi}$. Moreover
    \begin{align*}
    (C_\phi C_\phi^*\chi_{\{x\}})(y)
    =
    (C_\phi^*\chi_{\{x\}})({\phi(y)})
    =\frac{\mu(x)}{\mu(\phi(x))}\cdot \chi_{\phi(x)}\big({\phi(y)}\big), \quad y\in X_+.
    \end{align*}
This gives \eqref{04.21.04.09}.
\end{proof}
\begin{rem}
By Lemma \ref{01.21.04.09}, the domain of $C_\phi$ is dense in $\ell^2(\mu)$ whenever condition \eqref{star} is satisfied.
It is worth mentioning that condition \eqref{star} doesn't imply density of the domain of $C_\phi^2$.
For example there exists a hyponormal composition operator such that $\dz{C_\phi^2}=\{0\}$ (cf. \cite{bud, jab-jun-sto-Triv}; see also \cite{bud-dym-jab-sto}).
\end{rem}
{\bf Caution}: Throughout the rest of the paper we shall assume that $X$ is a countable set, $\mu$ is a $\sigma$-finite measure on $2^X$ and $\phi\colon X\to X$ is nonsingular.
\section{Normality classes}
By Lemma \ref{01.21.04.09}, if condition \eqref{star} is satisfied, then $\ee\subseteq \dz{C_\phi C_\phi^*}\cap\dz{C_\phi^*C_\phi}$. Below we show necessary conditions for the equality $C_\phi C_\phi^*|_{\lin \ee}=C_\phi^* C_\phi|_{\lin\ee}$ to hold. Later, using this information, we will characterize normal composition operators.
\begin{lem}\label{01.08.06.09}
Suppose that condition \eqref{star} is satisfied. If
    \begin{align}\label{01.04.06.09}
        C_\phi C_\phi^* f=C_\phi^* C_\phi f,\quad f\in\ee,
    \end{align}
then there exists a representative $\psi$ of $[\phi]$ such that
    \begin{enumerate}
    \item[(ii)] $\psi$ is bijective and $\psi(x)=x$ for every $x\in X\setminus X_+$,
    \item[(ii)] for all $x\in X_+$ and $k\in\zbb_+$,
                $\frac{\mu(x)}{\mu(\psi(x))}=\frac{\mu(\psi^k(x))}{\mu(\psi^{k+1}(x))}$.
    \end{enumerate}
\end{lem}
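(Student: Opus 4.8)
The plan is to make the operator identity \eqref{01.04.06.09} pointwise by inserting the explicit formulas \eqref{03.21.04.09} and \eqref{04.21.04.09} of Lemma \ref{01.21.04.09}. Since $\chi_{\{x\}}=0$ in $\ell^2(\mu)$ for $x\in X\setminus X_+$, it is enough to test \eqref{01.04.06.09} on $f=\chi_{\{x\}}$ with $x\in X_+$; observing that $\chi_{\{\phi(x)\}}\circ\phi=\chi_{\phi^{-1}(\{\phi(x)\})}$, the hypothesis becomes
$$\frac{\mu(x)}{\mu(\phi(x))}\,\chi_{\phi^{-1}(\{\phi(x)\})}=\frac{\miu{x}}{\mu(x)}\,\chi_{\{x\}}\quad\text{in }\ell^2(\mu),\quad x\in X_+ .$$
Because an equality in $\ell^2(\mu)$ means equality of the coordinates indexed by $X_+$, and because $\phi(X_+)\subseteq X_+$ by Lemma \ref{01.21.04.09}(1) so that by \eqref{star} every $\mu$-value occurring above is positive and finite, this single display carries all the information we need.

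Reading the displayed identity at the coordinate $x$ yields
$$\mu(x)^2=\mu(\phi(x))\,\miu{x},\qquad x\in X_+,\tag{$\ast$}$$
while reading it at a coordinate $y\in X_+$ with $y\neq x$ forces $\chi_{\phi^{-1}(\{\phi(x)\})}(y)=0$, i.e.\ $\phi(y)\neq\phi(x)$; hence $\phi|_{X_+}$ is injective. Now $(\ast)$ and $\mu(x)>0$ give $\miu{x}=\sum_{z\in\phi^{-1}(\{x\})\cap X_+}\mu(z)>0$, so each fibre $\phi^{-1}(\{x\})$ meets $X_+$, and by injectivity it meets $X_+$ in exactly one point. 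Thus $\phi|_{X_+}\colon X_+\to X_+$ is a bijection; setting $\psi=\phi$ on $X_+$ and $\psi=\mathrm{id}$ on $X\setminus X_+$ gives a map that agrees with $\phi$ on $X_+$ — hence a representative of $[\phi]$ — which is a bijection of $X$ fixing $X\setminus X_+$ pointwise. This is assertion (i).

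For (ii) I rephrase $(\ast)$ in terms of $\psi$: the unique point of $\phi^{-1}(\{x\})\cap X_+$ is $\psi^{-1}(x)$, and $\miu{x}$ equals its $\mu$-mass, so $(\ast)$ reads $\mu(x)^2=\mu(\psi(x))\,\mu(\psi^{-1}(x))$, equivalently
$$\frac{\mu(x)}{\mu(\psi(x))}=\frac{\mu(\psi^{-1}(x))}{\mu(x)},\qquad x\in X_+ .$$
Writing $\rho(x)=\mu(x)/\mu(\psi(x))$ on $X_+$ (note $\psi(X_+)=X_+$, so $\rho$ is defined and positive there), the last identity says $\rho\circ\psi=\rho$ on $X_+$; an immediate induction gives $\rho(\psi^k(x))=\rho(x)$ for all $x\in X_+$, $k\in\zbb_+$, which unwinds to $\frac{\mu(x)}{\mu(\psi(x))}=\frac{\mu(\psi^k(x))}{\mu(\psi^{k+1}(x))}$, as claimed.

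The whole argument is essentially bookkeeping once the operator equation is made coordinatewise; the one place that needs a moment's care is excluding the possibility that some fibre $\phi^{-1}(\{x\})$ with $x\in X_+$ is disjoint from $X_+$, and this is exactly what the diagonal relation $(\ast)$ forbids because $\mu(x)>0$. A secondary subtlety, already built into the statement, is that every identity derived from \eqref{01.04.06.09} holds only on $X_+$, so bijectivity of $\psi$ on the null part $X\setminus X_+$ must be imposed by hand in the construction of the representative.
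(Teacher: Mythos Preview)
Your proof is correct and follows essentially the same approach as the paper: both insert the explicit formulas \eqref{03.21.04.09} and \eqref{04.21.04.09} into \eqref{01.04.06.09}, read off the resulting identity at the diagonal coordinate to obtain the ratio relation and at off-diagonal coordinates to obtain injectivity, and then use the diagonal relation to force surjectivity onto $X_+$. The only cosmetic difference is that the paper phrases the injectivity and surjectivity steps as proofs by contradiction, whereas you argue directly; your organization (deriving $(\ast)$ first and then using it for surjectivity) is arguably cleaner.
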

\begin{proof}
Let $\psi$ be the representive of $[\phi]$ given by Lemma \label{01.21.04.09}\,(1). Suppose that $\psi$ is not bijective. If $\psi$ is not injective, then there exists $x\in  X_{+}$ such that the set $\psi^{-1}(\{\psi(x)\})$ contains at least two distinct points $x,x'\in X_{+}$.
By \eqref{04.21.04.09}, we get
\begin{align*}
(C_\psi C_\psi^*\chi_{\{x\}})(x)=(C_\psi C_\psi^*\chi_{\{x\}})(x')=\frac{\mu(x)}{\mu(\psi(x))}
\neq 0.
\end{align*}
This contradicts \eqref{01.04.06.09} since, by \eqref{03.21.04.09}, we have
$(C_\psi^* C_\psi \chi_{\{x\}})(x')=0$. Hence $\psi$ must be injective. Suppose now that $X\setminus\psi( X)\neq\varnothing$. Choose
$x\in \big(X\setminus\psi(X)\big)\cap X_{+}$ and observe that by \eqref{03.21.04.09} we have $(C_\psi^* C_\psi \chi_{\{x\}})(x)=0$. By \eqref{04.21.04.09} we get $(C_\psi C_\psi^* \chi_{\{x\}})(x)=\frac{\mu(x)}{\mu(\psi(x))}\neq0$, a contradiction with \eqref{01.04.06.09}.
This imply that $\psi(X)$ covers the whole of $X$. Consequently (ii) is valid.

Now,  by (i), for every $x\in X$ we have $\mu\circ\psi^{-1}(x)=\mu(\psi^{-1}(x))$. Comparing \eqref{03.21.04.09} and
\eqref{04.21.04.09} and using \eqref{01.04.06.09} we get
    \begin{align*}
    \frac{\mu(\psi^{-1}(x))}{\mu(x)}=\frac{\mu(x)}{\mu(\psi(x))},\quad x\in X_+
    \end{align*}
Reasoning by induction we can prove (ii).
\end{proof}
The next result contains another portion of information enlightening the structure of normal composition operators on $\ell^2$-spaces.
\begin{pro}\label{orbity}
Suppose that condition \eqref{star} and \eqref{01.04.06.09} hold. Then there exists a representative $\psi$ of $[\phi]$ such that$:$
    \begin{enumerate}
    \item[(i)] if $x\in X_+$ and $\oo_\psi(x)$ is finite, then there exists a positive real number $c$ such that $\mu(y)=c$ for all $y\in\oo_\psi(x)$;
    \item[(ii)] if $C_\phi$ is not bounded operator in $\ell^2(\mu)$, then there exists $Y\subseteq X_+$ such that
        \begin{itemize}
            \item $Y$ is infinite,
            \item for every $y\in Y$, the orbit $\oo_\psi(x)$ is infinite,
            \item $\oo_\psi(y)\cap\oo_\psi(y')=\varnothing$ for all $y,y\in Y$ such that $y\neq y'$.
            \end{itemize}
    \end{enumerate}
\end{pro}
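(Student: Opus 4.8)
The plan is to carry out everything with the single representative $\psi$ of $[\phi]$ produced by Lemma~\ref{01.08.06.09}: recall that $\psi$ is a bijection of $X$, that $\psi(x)=x$ for $x\in X\setminus X_+$, and that $\frac{\mu(x)}{\mu(\psi(x))}=\frac{\mu(\psi^{k}(x))}{\mu(\psi^{k+1}(x))}$ for all $x\in X_+$ and $k\in\zbb_+$. I would begin with two structural remarks. First, since $\psi$ is bijective and fixes every point of $X\setminus X_+$, it maps $X\setminus X_+$ onto itself and hence $X_+$ onto itself, so $X_+$ is $\psi$-invariant in both directions; consequently every orbit $\oo_\psi(x)$ lies entirely inside $X_+$ or entirely inside $X\setminus X_+$, and in the latter case it is a singleton. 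Second, the function $q(x):=\mu(x)/\mu(\psi(x))\in(0,\infty)$ is constant along orbits: reading the identity of Lemma~\ref{01.08.06.09}\,(ii) as $q(x)=q(\psi^{k}(x))$ for all $k\in\zbb_+$, and applying it at the point $\psi^{-k}(x)\in X_+$, gives $q(\psi^{-k}(x))=q(x)$ as well, so $q$ is constant on $\oo_\psi(x)$. Iterating $\mu(\psi(x))=\mu(x)/q(x)$ and using orbit-constancy of $q$ then yields $\mu(\psi^{k}(x))=\mu(x)\,q(x)^{-k}$ for $k\in\zbb_+$.

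For part~(i): if $\oo_\psi(x)$ is finite, then $\psi$ restricts to a permutation of the finite set $\oo_\psi(x)$, so $\psi^{p}(x)=x$ for some $p\Ge1$. Combining this with $\mu(\psi^{p}(x))=\mu(x)\,q(x)^{-p}$ gives $q(x)^{p}=1$, hence $q(x)=1$ because $q(x)$ is a positive real. Thus $\mu(\psi^{k}(x))=\mu(x)$ for every $k\in\zbb_+$; since, for a suitable $m$, $\psi^{-k}(x)=\psi^{mp-k}(x)$ belongs to the forward orbit, the equality $\mu(y)=\mu(x)$ holds for all $y\in\oo_\psi(x)$, and $c:=\mu(x)>0$ is the desired constant.

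For part~(ii): I would first translate the unboundedness of $C_\phi$ into a statement about $q$. Since $C_\phi=C_\psi$ and $\psi$ differs from $\phi$ only on the $\mu$-null set $X\setminus X_+$, we have $\miu{x}=\mu(\psi^{-1}(x))$ for every $x$, so by \eqref{rnatomic} (together with the essential-boundedness criterion for $C_\phi$ recalled earlier, or directly from \eqref{03.21.04.09}) the operator $C_\phi$ is bounded if and only if $\sup_{x\in X_+}\mu(\psi^{-1}(x))/\mu(x)<\infty$; and $\mu(\psi^{-1}(x))/\mu(x)=q(x)$ by the identity $\frac{\mu(\psi^{-1}(x))}{\mu(x)}=\frac{\mu(x)}{\mu(\psi(x))}$ obtained inside the proof of Lemma~\ref{01.08.06.09}. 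Hence $C_\phi$ unbounded means $\sup_{x\in X_+}q(x)=\infty$. Now $q$ is constant on every orbit, equals $1$ on every finite orbit by part~(i), and takes a finite value on each infinite orbit; therefore this supremum can be infinite only if there are infinitely many distinct infinite orbits (each of which lies in $X_+$ by the first structural remark). Picking one point from each such orbit gives a set $Y\subseteq X_+$ which is infinite, all of whose points lie on infinite orbits, and which satisfies $\oo_\psi(y)\cap\oo_\psi(y')=\varnothing$ for $y\neq y'$ in $Y$ because distinct orbits of $\psi$ are disjoint.

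Since (i), (ii) and the orbit-constancy of $q$ all refer to the same $\psi$, this single representative works for the whole statement. I do not anticipate a genuine obstacle: the only point requiring a little care is the equivalence between $C_\phi$ being bounded and $q$ being bounded on $X_+$, which is supplied by the explicit formulae of Lemma~\ref{01.21.04.09} and the boundedness criterion recalled in the Prerequisites; the rest is a direct unwinding of Lemma~\ref{01.08.06.09}.
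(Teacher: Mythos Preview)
Your proof is correct and follows essentially the same route as the paper's: you use the bijective representative $\psi$ from Lemma~\ref{01.08.06.09}, observe that $\{\mu(\psi^n(x))\}$ is a geometric sequence with ratio constant along orbits, deduce that a finite orbit forces this ratio to equal $1$, and then conclude (ii) by noting that the Radon--Nikodym derivative $\hsf_\psi(x)=q(x)$ would be bounded if only finitely many disjoint infinite orbits existed. The presentation via the orbit-constant function $q$ is somewhat cleaner, but the argument and all the key steps coincide with those in the paper.
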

\begin{proof}
Let $\psi$ be the representative of $[\phi]$ given by Lemma \ref{01.08.06.09}. We may assume that $\psi(x)=x$ for all $x\in X\setminus X_{+}$ by Lemma \label{01.21.04.09}.

Suppose that $x\in X_+$. Then, by Lemma \ref{01.08.06.09}\,(ii), the sequence $\{\mu(\psi^n(x))\}_{n=0}^\infty$ is a geometric sequence with a common ratio $q>0$.
If $\oo_\psi(x)$ is finite and $\oo_\psi(x)\neq\{x\}$, then there is $n\in\N$ such that $\psi(\psi^{n}(x))=x$.
Hence $\mu(x)=\mu(\psi^{n}(x))=q^{n+1}\mu(x)$, which is possible only in the case $q=1$. All of this gives (i).

For the proof of (ii) suppose that for all $x\in X_+$, the set $\oo_\psi(x)$ is finite. Then by (i), $\frac{\mu\circ\psi^{-1}(x)}{\mu(x)}=1$ for every $x\in X_+$. This and \eqref{rnatomic} implies that $\hsf_\psi=1$ a.e. $[\mu]$. As a consequence $C_\phi=C_\psi$ is bounded, a contradiction.
Therefore, $\oo_\psi(x)$ must be infinite for some $x\in X_+$. The number of disjoint and infinite orbits is necessarily infinite as well.
Indeed, suppose that $\oo_\psi(x_1),\ldots, \oo_\psi(x_k)$ are all pairwise disjoint and infinite orbits of $\psi$, with some $x_1,\ldots x_k\in X$ and $k\in\N$. As we already know, $\{\mu(\psi^n(x_i))\}_{n=0}^\infty$ is a geometric sequence with a common ratio $q_i\Ge 0$. Again by (i) and \eqref{rnatomic}, $\hsf_\psi\Le 1+\max\{q_i\colon i=1,\ldots, k\}$ a.e. $[\mu]$. This leads to a contradiction again. Therefore (ii) holds and the proof is complete.
\end{proof}
\begin{rem}
Regarding assertion (i) of Proposition \ref{orbity} we note that $\oo_\psi(x)=\{x\}$ for every $x\in X\setminus X_+$ and thus $\mu\big(\oo_\psi(x)\big)=0$.
\end{rem}
The characterization of normal composition operators in $\ell^2(\mu)$ reads as follows (cf. \cite[Proposition 8.2]{bud-jab-jun-sto-Comp}).
\begin{thm}\label{21.01.11.01}
The following conditions are equivalent:
    \begin{enumerate}
    \item[(i)] $C_\phi$ is normal,
    \item[(ii)] $\ee\subseteq \dz{C_\phi C_\phi^*}\cap\dz{C_\phi^* C_\phi}$ and $C_\phi C_\phi^*f=C_\phi^* C_\phi f$ for every $f\in\ee$,
    \item[(iii)] there exists a representative $\psi$ of $[\phi]$, which is bijective and satisfy
    \begin{align}\label{21.01.11.03}
    \frac{\mu(x)}{\mu(\psi(x))}=\frac{\mu(\psi^k(x))}{\mu(\psi^{k+1}(x))}, \quad k\in\N, x\in X_+.
    \end{align}
    \end{enumerate}
\end{thm}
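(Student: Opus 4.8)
The plan is to prove the cycle (i)$\Rightarrow$(ii)$\Rightarrow$(iii)$\Rightarrow$(i), with the last implication carrying almost all of the content. For (i)$\Rightarrow$(ii): a normal operator is closed and densely defined, so Lemma~\ref{01.21.04.09}\,(2)--(3) forces condition \eqref{star}, whence Lemma~\ref{01.21.04.09}\,(7)--(8) gives $\ee\subseteq\dz{C_\phi^*C_\phi}\cap\dz{C_\phi C_\phi^*}$, and the operator identity $C_\phi^*C_\phi=C_\phi C_\phi^*$ restricts to $\ee$. For (ii)$\Rightarrow$(iii): from $\ee\subseteq\dz{C_\phi^*C_\phi}\subseteq\dz{C_\phi}$ and Lemma~\ref{01.21.04.09}\,(2)--(3) we again obtain \eqref{star} and density of $\dz{C_\phi}$ (so that $C_\phi^*$ is meaningful), while the displayed equality in (ii) is precisely \eqref{01.04.06.09}; hence Lemma~\ref{01.08.06.09} applies and produces a bijective representative $\psi$ of $[\phi]$, fixing $X\setminus X_+$ pointwise, that satisfies exactly \eqref{21.01.11.03}.

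For (iii)$\Rightarrow$(i) recall that $C_\phi=C_\psi$. Bijectivity of $\psi$ gives $\mu\circ\psi^{-1}(x)=\mu(\psi^{-1}(x))<\infty$ for every $x$ by $\sigma$-finiteness, so \eqref{star} holds and, by Lemma~\ref{01.21.04.09}\,(7)--(8),
\[
C_\psi^*C_\psi\,\chi_{\{x\}}=\frac{\mu(\psi^{-1}(x))}{\mu(x)}\,\chi_{\{x\}},\qquad
C_\psi C_\psi^*\,\chi_{\{x\}}=\frac{\mu(x)}{\mu(\psi(x))}\,\chi_{\{x\}},\qquad x\in X_+,
\]
where the second identity uses $\chi_{\{\psi(x)\}}\circ\psi=\chi_{\{x\}}$, valid by injectivity of $\psi$. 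Since $\psi$ fixes $X\setminus X_+$ it restricts to a bijection of $X_+$, so $\psi^{-1}(x)\in X_+$ whenever $x\in X_+$; applying \eqref{21.01.11.03} at the point $\psi^{-1}(x)$ with $k=1$ yields $\mu(\psi^{-1}(x))/\mu(x)=\mu(x)/\mu(\psi(x))$. Hence $C_\psi^*C_\psi$ and $C_\psi C_\psi^*$ agree on $\{\chi_{\{x\}}:x\in X_+\}$, thus on all of $\ee$, and so on $\lin\ee$.

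The remaining, and only genuinely delicate, point is to promote this to the equality $C_\psi^*C_\psi=C_\psi C_\psi^*$ of (unbounded) operators. For this I would use that $\{\mu(x)^{-1/2}\chi_{\{x\}}:x\in X_+\}$ is an orthonormal basis of $\ell^2(\mu)$, so the multiplication (diagonal) operator $D$ given by $D\chi_{\{x\}}=\bigl(\mu(x)/\mu(\psi(x))\bigr)\chi_{\{x\}}$ on its natural maximal domain is self-adjoint, and $D|_{\lin\ee}$ is essentially self-adjoint with closure $D$ (the standard fact that finite linear combinations of basis vectors form a core for a diagonal operator). By von Neumann's theorem both $C_\psi^*C_\psi$ and $C_\psi C_\psi^*$ are self-adjoint, and by the previous paragraph both extend $D|_{\lin\ee}$; therefore each coincides with $D$, so $C_\psi^*C_\psi=C_\psi C_\psi^*$ and $C_\phi=C_\psi$ is normal. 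I expect this final step to be the main obstacle: the earlier parts are essentially bookkeeping with the preceding lemmas, whereas here one must pass from ``agreement on a core'' to ``equality of operators'', which works only because the bijectivity of $\psi$ furnished by (iii) makes $C_\psi C_\psi^*$ diagonal in the standard basis.
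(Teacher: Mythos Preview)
Your implications (i)$\Rightarrow$(ii) and (ii)$\Rightarrow$(iii) coincide with the paper's. For (iii)$\Rightarrow$(i) the two arguments diverge. The paper derives the explicit formula $(C_\psi^*f)(x)=f(\psi^{-1}(x))\,\mu(\psi^{-1}(x))/\mu(x)$ from bijectivity and then, by change of variables together with \eqref{21.01.11.03}, verifies directly that $\dz{C_\psi}=\dz{C_\psi^*}$ and that $\|C_\psi f\|=\|C_\psi^*f\|$ for every $f$ in this common domain; normality follows at once. You instead observe that bijectivity makes $C_\psi C_\psi^*$ diagonal in the basis $\ee$, so both $C_\psi^*C_\psi$ and $C_\psi C_\psi^*$ are self-adjoint (von Neumann) extensions of the same essentially self-adjoint diagonal operator $D|_{\lin\ee}$ and hence coincide. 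This is correct and arguably cleaner; the paper itself acknowledges essentially this alternative (via \cite{sto-sza4}) in the Remark immediately after the theorem, but opts for the explicit domain/norm computation as ``more elementary''. Your route trades the series manipulations for the standard fact that $\lin\ee$ is a core for any maximal multiplication operator on $\ell^2(\mu)$.

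One caveat worth flagging: your line ``since $\psi$ fixes $X\setminus X_+$'' is not part of condition (iii) as stated; you need it so that $\psi^{-1}(x)\in X_+$ before invoking \eqref{21.01.11.03} at $\psi^{-1}(x)$. The paper's own computation tacitly relies on the same fact (it applies \eqref{21.01.11.03} at points $\psi^{-2}(x)$). This does not damage the overall equivalence, because the representative produced by Lemma~\ref{01.08.06.09} in the step (ii)$\Rightarrow$(iii) does fix $X\setminus X_+$ pointwise; but strictly speaking (iii) should be read with that additional clause, and it would be worth saying so explicitly.
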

\begin{proof}
(i)$\Rightarrow$(ii)\ Evident, since by Lemma \ref{01.21.04.09} we have $\ee\subseteq\dz{C_\phi C_\phi^*}\cap\dz{C_\phi^* C_\phi}$.

(ii)$\Rightarrow$(iii)\ By Lemma \ref{01.21.04.09} condition \eqref{star} is satisfied. The rest follows immediately from Lemma \ref{01.08.06.09}.

(iii)$\Rightarrow$(i) By bijectivity of $\psi$ and \eqref{02.21.04.09} we have
    \begin{align}\label{21.01.11.02}
    (C_\psi^*f)(x)= f(\psi^{-1}(x))\, \frac{\mu(\psi^{-1}(x))}{\mu(x)}, \quad x\in X_+,
    \end{align}
for any $f\in\dz{C_\psi^*}$. Observe that $\dz{C_\psi^*}=\dz{C_\psi}$. Indeed, if $f\in\dz{C_\psi}$, then by \eqref{21.01.11.03} and \eqref{17.01.11.01} we get
\begin{align*}
\sum_{x\in X_+} |f(\psi^{-1}(x))|^2\Big(\frac{\mu(\psi^{-1}(x)}{\mu(x)}\Big)^2\mu(x)
&=\sum_{x\in X}^\infty |f(\psi^{-1}(x))|^2\mu(\psi^{-2}(x))\\
&=\sum_{x\in X}^\infty |f(\psi(x))|^2\mu(x)<\infty
\end{align*}
This, the Cauchy-Schwarz inequality and bijectivity of $\psi$ imply that\allowdisplaybreaks
\begin{align*}
\big|\is{C_\psi g}{f}\big|
&=
\Big|\sum_{x\in X}^\infty g(\psi(x))\overline{f(x)}\mu(x)\Big|\\
&=
\Big|\sum_{x\in X}^\infty g(x)\overline{f(\psi^{-1}(x))}\mu(\psi^{-1}(x))\Big|\\
&=
\Big|\sum_{x\in X}^\infty g(x) \overline{f(\psi^{-1}(x))}\,\frac{\mu(\psi^{-1}(x))}{\mu(x)}\,\mu(x)\Big|\\
&\Le
\|g\| \sum_{x\in X}^\infty |f(\psi^{-1}(x))|^2 \Big(\frac{\mu(\psi^{-1}(x))}{\mu(x)}\Big)^2\mu(x),\quad g\in\dz{C_\psi}.
\end{align*}
Hence $f\in\dz{C_\psi^*}$. This proves $\dz{C_\psi}\subseteq \dz{C_\psi^*}$. The reverse inclusion can be proved in a similar fashion. Namely, by \eqref{21.01.11.02}, $\sum_{x\in X_+} |f(\psi^{-1}(x))|^2 \frac{\mu(\psi^{-1}(x))^2}{\mu(x)}<\infty$ for every $f\in\dz{C_\psi^*}$. Therefore, by \eqref{21.01.11.03}, we have
    \begin{align*}\allowdisplaybreaks
    \sum_{x\in X_+}^\infty |f(\psi(x))|^2\mu(x)
    &=
    \sum_{x\in X_+}^\infty |f(\psi^{-1}(x))|^2\mu(\psi^{-2}(x))\\
    &=
    \sum_{x\in X_+}^\infty|f(\psi^{-1}(x))|^2\frac{\mu(\psi^{-2}(x))}{\mu(\psi^{-1}(x))}\,\mu(\psi^{-1}(x))\\
    &=
    \sum_{x\in X_+}^\infty |f(\psi^{-1}(x))|^2\frac{\mu(\psi^{-1}(x))}{\mu(x)}\,\mu(\psi^{-1}(x))\\
    &=
    \sum_{x\in X_+}^\infty\Big|f(\psi^{-1}(x))\frac{\mu(\psi^{-1}(x))}{\mu(x)}\Big|^2\mu(x)<\infty,
    \end{align*}
for every $f\in\dz{C_\psi^*}$. This and \eqref{17.01.11.01} imply $\dz{C_\psi}\subseteq \dz{C_\psi^*}$. Moreover, by comparing left and right hand sides we get that $\|C_\psi^*f\|=\|C_\psi f\|$ for every $f\in\dz{C_\psi^*}=\dz{C_\psi}$. Hence $C_\psi$ is normal. Since $\psi\in[\phi]$, we see that $C_\phi$ is normal.
\end{proof}
\begin{rem}
Another way of proving implication (ii) $\Rightarrow$ (i) of Theorem \ref{21.01.11.01} is to show that $C|_{\ee}$ is essentially selfadjoint and use \cite[Theorem 5]{sto-sza4}.
However, though less general, our direct approach is more elementary and fits better to the context.
\end{rem}
Lemma \ref{01.08.06.09} enables us to prove that $C_\phi$ is unitarily equivalent to
orthogonal sum of two-sided backward shift operators whenever condition \eqref{01.04.06.09} is satisfied. Before we state the result formally, let us introduce some notation. If $\gamma=\{\gamma_k\}_{k=-\infty}^\infty$ is a sequence of non-negative real numbers, then $\gamma(k)=\gamma_k$, $k\in\zbb$,
defines a measure\footnote{\; For simplicity, both the sequence and the measure are denoted by the same symbol.} $\gamma$ on $2^\zbb$.
The two-sided backward shift $S$ in $\ell^2(\gamma)$ is the operator given by the formula
    \begin{align*}
    \dz{S}&=\Big\{f\in \ell^2(\gamma)\colon \sum_{n\in \zbb}^\infty |f(n+1)|^2\gamma_n<\infty\Big\},\\
    S\chi_{\{k+1\}}&=\chi_{\{k\}}, \quad k\in\zbb.
    \end{align*}
If $\gamma=\{\gamma_k\}_{k=1}^n$, $n\in\N$, is a finite sequence of non-negative real numbers, then, in a same fashion, $\gamma(k)=\gamma_k$ defines a measure $\gamma$ on $2^{\{1,\ldots,n\}}$. By the two-sided backward shift on $\ell^2(\gamma)$ we understand the operator given by
    \begin{align*}
    S\chi_{\{i+1\}}=\left\{%
\begin{array}{ll}
    \chi_{\{i\}}, & \hbox{for $i=1,\ldots,n-1$;} \\
    \chi_{\{n\}}, & \hbox{for $i=0$.} \\
\end{array}%
\right.    .
    \end{align*}
\begin{pro}\label{24.01.11.01}
$C_\phi$ is normal if and only if there exists a subset $\mathcal{J}$ of $X_+$ and a family $\{\gamma(x)\colon x\in \mathcal{J}\}$ of geometric sequences
{\em(}finite\footnote{\;\label{fingeo} A finite sequence is geometric if it is a constant one.} or infinite{\em)} of positive real numbers
such that $C_\phi$ is unitarily eqivalent to $\bigoplus_{x\in \mathcal{J}}^\infty S(x)$,
where $S(x)$ is the two-sided backward shift in $\ell^2(\gamma(x))$.
\end{pro}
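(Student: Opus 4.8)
The plan is to prove the two implications separately; both are close corollaries of Theorem \ref{21.01.11.01}.

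\emph{Normality implies the shift decomposition.} Assume $C_\phi$ is normal. By Theorem \ref{21.01.11.01} there is a bijective representative $\psi$ of $[\phi]$ satisfying \eqref{21.01.11.03}, and we may take $\psi(x)=x$ for $x\in X\setminus X_+$ by Lemma \ref{01.21.04.09}(1); the same $\psi$ (the representative produced by Lemma \ref{01.08.06.09}) has, by Proposition \ref{orbity}(i), the property that $\mu$ is constant along every finite $\psi$-orbit. Since $\psi$ is a bijection of $X$ fixing $X\setminus X_+$ pointwise, it restricts to a bijection of $X_+$, so $X_+$ is the disjoint union of the orbits $\oo_\psi(x)$, each of which is either a finite cycle or a two-sided infinite orbit $\{\psi^k(x):k\in\zbb\}$. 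Fix $\mathcal J\subseteq X_+$ meeting every orbit in exactly one point, and for $x\in\mathcal J$ let $\gamma(x)$ record the values of $\mu$ along $\oo_\psi(x)$ in the natural order: $\gamma(x)_k=\mu(\psi^k(x))$, $k\in\zbb$, if the orbit is infinite, and $\gamma(x)_i=\mu(\psi^{i-1}(x))$, $i\in\{1,\ldots,n\}$, if it is a cycle of length $n$. By \eqref{21.01.11.03} this sequence is geometric with positive entries in the infinite case, and by Proposition \ref{orbity}(i) it is a positive constant sequence in the finite case, so in both cases it is geometric in the sense of the statement. As $\{\chi_{\{y\}}:y\in X_+\}$ is an orthogonal basis of $\ell^2(\mu)$, $\chi_{\{y\}}=0$ for $y\notin X_+$, and $\|\chi_{\{\psi^k(x)\}}\|^2=\mu(\psi^k(x))=\gamma(x)_k$, the assignment $\chi_{\{\psi^k(x)\}}\mapsto\chi_{\{k\}}$ extends to a unitary $U\colon\ell^2(\mu)\to\bigoplus_{x\in\mathcal J}\ell^2(\gamma(x))$. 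Finally, $C_\phi=C_\psi$ and $C_\psi\chi_{\{y\}}=\chi_{\{\psi^{-1}(y)\}}$, so a direct comparison with the defining formulas for the two-sided backward shifts shows that $UC_\psi U^{-1}$ and $\bigoplus_{x\in\mathcal J}S(x)$ agree on $\lin\ee$, and they have the same domain: by \eqref{17.01.11.01} and $\mu\circ\phi^{-1}(y)=\mu(\psi^{-1}(y))$ (bijectivity of $\psi$), the series defining $\dz{C_\phi}$ splits over the orbits precisely into the series defining the $\dz{S(x)}$. Hence $C_\phi$ is unitarily equivalent to $\bigoplus_{x\in\mathcal J}S(x)$.

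\emph{The shift decomposition implies normality.} Suppose $C_\phi$ is unitarily equivalent to $\bigoplus_{x\in\mathcal J}S(x)$ with every $\gamma(x)$ geometric. Each $S(x)$ is a composition operator on $\ell^2(\gamma(x))$ induced by the bijection $\sigma$ given by $\sigma(k)=k+1$ on $\zbb$ (respectively, the cyclic permutation of $\{1,\ldots,n\}$ with $\sigma(i)=i+1$ for $i<n$ and $\sigma(n)=1$), and since $\gamma(x)$ is geometric (constant in the finite case) the ratio $\gamma(x)_k/\gamma(x)_{\sigma(k)}$ does not depend on $k$; thus $(\sigma,\gamma(x))$ satisfies \eqref{21.01.11.03} and $S(x)$ is normal by Theorem \ref{21.01.11.01}. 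An orthogonal sum of normal operators is normal — its adjoint is the orthogonal sum of the adjoints, so $A^*A$ and $AA^*$ both equal the orthogonal sum of the coinciding operators $S(x)^*S(x)=S(x)S(x)^*$ — so $\bigoplus_{x\in\mathcal J}S(x)$, hence also the unitarily equivalent $C_\phi$, is normal.

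\emph{Main difficulty.} There is no essential obstacle. The point requiring care is the end of the first implication: one must check that the unitary $U$, defined on the basis vectors $\chi_{\{y\}}$, intertwines the \emph{closed} operators $C_\phi$ and $\bigoplus_{x}S(x)$, not merely their restrictions to $\lin\ee$. This is exactly where \eqref{17.01.11.01} (together with bijectivity of $\psi$) is used, to match $\dz{C_\phi}$ with $\bigoplus_{x}\dz{S(x)}$ orbit by orbit. One should also keep in mind the degenerate cases: one-point orbits, where $S(x)$ is the identity on a one-dimensional space and $\gamma(x)$ is a single positive number, and the empty sum, which occurs precisely when $X_+=\varnothing$, i.e.\ $C_\phi=0$.
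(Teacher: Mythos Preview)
Your proof is correct and follows essentially the same approach as the paper: obtain a bijective representative $\psi$, decompose $X_+$ into $\psi$-orbits, read off geometric sequences $\gamma(x)$ via Lemma~\ref{01.08.06.09}/Proposition~\ref{orbity}, and exhibit the obvious unitary intertwining $C_\psi$ with the shift sum. You supply more detail than the paper in two places---the domain check for the unbounded intertwining and the ``if'' direction (which the paper simply calls obvious, while you invoke Theorem~\ref{21.01.11.01} for each $S(x)$ and the stability of normality under orthogonal sums)---but the route is the same.
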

\begin{proof}
If $C_\phi$ is normal, then by Lemma \ref{01.08.06.09} we may assume that transformation $\phi$ is bijective and $\phi(x)=x$ for all $x\in X\setminus X_{+}$. Let $\mathcal{J}$ be a subset of $X_+$ such that $X_+=\bigcup_{x\in \mathcal{J}}\oo(x)$ and $\oo(x)\cap\oo(x')=\varnothing$ for all $x,x'\in \mathcal{J}$, $x\neq x'$. Clearly, $\ell^2(\mu)=\bigoplus_{x\in \mathcal{J}}\chi_{\oo(x)}\ell^2(\mu)$. Moreover, $\chi_{\oo(x)}\ell^2(\mu)$ is reducing for $C_\phi$ for every $x\in \mathcal{J}$. These two facts imply that $C_\phi=\bigoplus_{x\in \mathcal{J}}C_\phi|_{\chi_{\oo(x)}\ell^2(\mu)}$. Therefore,
it suffices to show that for every $x\in \mathcal{J}$, $C_\phi|_{\chi_{\oo(x)}\ell^2(\mu)}$ is unitarily equivalent to the shift operator $S(x)$ acting in $\ell^2(\gamma(x))$. This is evident if $\oo(x)=\{x\}$. Hence, we fix $x\in \mathcal{J}$ such that $\oo(x)\neq\{x\}$.
If $\oo(x)$ is finite, then $\oo(x)=\{x,\phi(x),\ldots,\phi^{m_x}(x)\}$ for some $m_x\in\N$. We may assume that $\oo(x)\neq\{x,\phi(x),\ldots,\phi^{k}(x)\}$ for any $k<m_x$. Define $\gamma(x)=\big(\mu(x), \mu(\{\phi(x)\}),\ldots, \mu(\{\phi^{m_x}(x)\})\big)$.
In case $\oo(x)$ is infinite we have $\oo(x)=\big\{\phi^k(x)\big\}_{k=-\infty}^\infty$. Define $\gamma(x)=\{\mu(\phi^{k}(x))\}_{k=-\infty}^\infty$.
In both the situations, by Lemma \ref{01.08.06.09} and Proposition \ref{orbity}, $\gamma(x)$ is a geometric sequence. As easily seen, the operator $U(x)\colon \chi_{\oo(x)}\ell^2(\mu)\to\ell^2(\gamma(x))$ determined by $U(x)\chi_{\{\phi^k(x)\}}=\chi_{\{k+1\}}$ is unitary and $U_xC_\phi|_{\chi_{\oo(x)}\ell^2(\mu)}=S(x)U_x$. This proves the ''only if'' part of  the claim. The ''if'' part is obvious.
\end{proof}
By the above proposition, any normal composition operator can be regarded as a orthogonal sum of shift operators. This leads to a natural question concerning subnormal composition operators in $\ell^2$-spaces. Namely, we can ask whether every subnormal composition operator have normal extension in form of composition operator acting in $L^2$-space over a discrete measure space (see the paper \cite{jab-jun-sto-next} which deals with this kind of problem within the class of weighted shifts on directed trees).
\begin{opq}
Let $C_\phi$ be a non-normal subnormal composition operator acting in an $\ell^2$-space. Does there exists a composition operator $C_\varPhi$ acting in an $\ell^2$-space, which is a normal extension of $C_\phi$?
\end{opq}
As a corollary to Proposition \ref{24.01.11.01} we also get information when a transformation of a countable set induces unbounded normal composition operator
and which spaces $\ell^2$-spaces admit unbounded normal composition operator.
\begin{cor}
Let $Z$ be a countably infinite set. Then the following assertions are valid$:$
    \begin{enumerate}
    \item Suppose that $\tau\colon Z\to Z$ is bijective and poses infinite number of distinct and infinite orbits. Then there is a measure $\varrho$ on $2^Z$ such that $\varrho(z)>0$ for every $z\in Z$ and $C_\tau$ is unbounded normal operator in $\ell^2(\varrho)$.
    \item Suppose that $\varrho$ is a measure on $2^Z$ such that $\{ \varrho(z)\colon z\in Z\}$ can be decomposed into infinite number of geometric sequences with common ratios tending either to $+\infty$ or $0$.
    Then there is a mapping $\tau\colon Z\to Z$ such that $C_\tau$ is unbounded normal operator in $\ell^2(\varrho)$.
    \end{enumerate}
\end{cor}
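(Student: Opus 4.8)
The plan is to deduce both assertions from Proposition~\ref{24.01.11.01}, which tells us that a composition operator is normal exactly when it decomposes as an orthogonal sum of two‑sided backward shifts acting in $\ell^2$ over geometric sequences, and that it is unbounded precisely when infinitely many of those geometric sequences have common ratio tending to $+\infty$ or $0$ (equivalently, $\hsf_\psi$ is not essentially bounded). So in each part the task is just to match up the given data with such a decomposition.

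For (1): $\tau\colon Z\to Z$ is a bijection with infinitely many pairwise disjoint infinite orbits. Fix a decomposition $Z=\bigcup_{x\in\mathcal J}\oo_\tau(x)$ into disjoint orbits, and let $\mathcal J_\infty\subseteq\mathcal J$ index those orbits that are infinite; by hypothesis $\mathcal J_\infty$ is infinite, so we may pick distinct $x_1,x_2,\ldots\in\mathcal J_\infty$. On each infinite orbit $\oo_\tau(x)=\{\tau^k(x)\}_{k\in\zbb}$ I would place a geometric sequence of weights $\varrho(\tau^k(x))=\varrho(x)\,q_x^{\,k}$ with $q_x>0$; on every finite orbit and on $\oo_\tau(x)$ for $x\notin\{x_1,x_2,\ldots\}$ I would simply take $\varrho$ constant equal to $1$ (the degenerate ``geometric'' case of footnote~\ref{fingeo}). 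Choosing $q_{x_n}=n$ (say) for $n\ge1$ makes $\{q_{x_n}\}$ unbounded. Then $\varrho(z)>0$ for every $z$ by construction, condition~\eqref{star} holds trivially since every point has finite mass, and~\eqref{21.01.11.03} holds orbit‑by‑orbit because each orbit carries a genuine geometric sequence; hence $C_\tau$ is normal by Theorem~\ref{21.01.11.01}. Finally $\hsf_\tau(\tau^k(x_n))=\varrho(\tau^{k-1}(x_n))/\varrho(\tau^k(x_n))=1/q_{x_n}\to0$ (or, reading the other way, $=q_{x_n}\to\infty$ along the reversed orbits), so $\hsf_\tau$ is not essentially bounded and $C_\tau$ is unbounded. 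One small point to check: $\sigma$‑finiteness of $\varrho$, which here just means $\varrho(z)<\infty$ for all $z$, and that is immediate from the explicit formula.

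For (2): we are handed $\varrho$ on $2^Z$ with $\{\varrho(z):z\in Z\}$ decomposed into infinitely many geometric sequences whose common ratios tend to $+\infty$ or $0$. Reading each geometric sequence as being indexed by $\zbb$ (if two‑sided infinite) or by $\{1,\ldots,m\}$ (if finite, hence constant), I would define $\tau$ on the block corresponding to the $n$‑th sequence $\{\varrho_k^{(n)}\}$ to be the shift $z_k\mapsto z_{k-1}$ in the infinite case and the cyclic permutation in the finite case, exactly mirroring the shift operators $S(x)$ of Proposition~\ref{24.01.11.01}. Since $Z$ is the disjoint union of these blocks, this defines a bijection $\tau\colon Z\to Z$, each block‑orbit carries a geometric weight sequence, so~\eqref{21.01.11.03} holds and $C_\tau$ is normal; and because infinitely many common ratios escape to $0$ or $\infty$, the corresponding values of $\hsf_\tau$ do too, so $C_\tau$ is unbounded. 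Strictly speaking one should note that a block may a priori be a one‑sided infinite geometric sequence, which does not directly fit a two‑sided orbit; but a one‑sided geometric sequence with ratio $q\ne1$ has infinite total mass in one direction and so cannot be a summand here unless one allows reindexing, and in any case the hypothesis ``common ratios tending to $+\infty$ or $0$'' forces genuinely non‑constant infinitely many pieces, which is all that is needed for unboundedness.

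The only genuine obstacle is bookkeeping rather than mathematics: in part (2) one must be slightly careful about what shapes the geometric pieces of $\{\varrho(z)\}$ can have (two‑sided, one‑sided, or finite) and arrange $\tau$ so that each piece becomes a single $\tau$‑orbit of the matching type, and in part (1) one must make the free choice of ratios $q_{x_n}$ so that they form an unbounded (or null) set while keeping all weights finite and positive. Both are handled by the explicit constructions above, after which Theorem~\ref{21.01.11.01} and Proposition~\ref{24.01.11.01} do the rest.
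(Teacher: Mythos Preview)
Your overall approach is the intended one (the paper states the corollary without proof, as a direct consequence of Proposition~\ref{24.01.11.01} and Theorem~\ref{21.01.11.01}), and the constructions in both parts are the right ones. There is, however, a concrete error in part~(1) that makes your operator bounded rather than unbounded.

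With $\varrho(\tau^k(x_n))=q_{x_n}^{\,k}$ and $q_{x_n}=n$, the Radon--Nikodym derivative on the orbit of $x_n$ is
\[
\hsf_\tau\big(\tau^k(x_n)\big)=\frac{\varrho(\tau^{k-1}(x_n))}{\varrho(\tau^k(x_n))}=\frac{1}{q_{x_n}}=\frac{1}{n},
\]
which you compute correctly, but from $\hsf_\tau\to 0$ you conclude that $\hsf_\tau$ is not essentially bounded. It is: $\hsf_\tau\le 1$ everywhere (equal to $1$ on the orbits with constant weight and to $1/n\le 1$ on the orbit of $x_n$), so $C_\tau$ is a contraction. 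The parenthetical about ``reading the other way along the reversed orbits'' does not rescue this, since $\tau$ is fixed and $\hsf_\tau$ is determined by it; there is no second reading. The fix is immediate: take the ratios tending to $0$ instead, e.g.\ $q_{x_n}=1/n$ (equivalently, set $\varrho(\tau^k(x_n))=q_{x_n}^{-k}$ with $q_{x_n}=n$). Then $\hsf_\tau=n$ on the $n$-th orbit and $C_\tau$ is genuinely unbounded. With this correction part~(1) goes through.

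In part~(2) the construction is fine. The digression on one-sided sequences is unnecessary and the justification you offer (``infinite total mass in one direction'') is not to the point, since nothing in the hypothesis forces $\varrho(Z)<\infty$. The natural reading of the (admittedly informal) hypothesis is that the pieces are two-sided or finite, matching the possible orbit types of a bijection, after which Theorem~\ref{21.01.11.01} applies directly.
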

Now, we turn our attention to formally normal composition operators in $\ell^2$-spaces.
As it was proven in \cite[Theorem 9.4]{bud-jab-jun-sto-Comp}, within the class of unbounded composition operators in $L^2$-spaces there is no difference
between normal and formally normal operators. We show below that, in the context of $\ell^2$-spaces, normality of $C_\phi$ is equivalent to formal normality of $C_\phi|_{\lin\ee}$.
\begin{pro}\label{24.10.09.01}
$C_\phi$ is normal if and only if $\ee\subseteq\dz{C_\phi}$ and $C_\phi|_{\lin\ee}$ is formally normal.
\end{pro}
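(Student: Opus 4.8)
The plan is to obtain both implications from Theorem \ref{21.01.11.01}, the domain computations in Lemma \ref{01.21.04.09}, and the polarization identity for sesquilinear forms.

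For the ``only if'' part I would argue as follows. If $C_\phi$ is normal it is densely defined, so Lemma \ref{01.21.04.09}(3) gives $\ee\subseteq\dz{C_\phi}$, and then condition \eqref{star} holds by Lemma \ref{01.21.04.09}(2). Set $A:=C_\phi|_{\lin\ee}$; since $A\subseteq C_\phi$, taking adjoints gives $C_\phi^*\subseteq A^*$. Using the standard facts that a normal operator $N$ satisfies $\dz{N}=\dz{N^*}$ and $\|Nf\|=\|N^*f\|$ for $f\in\dz{N}$, one gets $\dz{A}=\lin\ee\subseteq\dz{C_\phi}=\dz{C_\phi^*}\subseteq\dz{A^*}$, and for $f\in\lin\ee$, $A^*f=C_\phi^*f$, whence $\|A^*f\|=\|C_\phi^*f\|=\|C_\phi f\|=\|Af\|$. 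Since $\lin\ee$ (the finitely supported functions on $X_+$) is dense in $\ell^2(\mu)$, $A$ is densely defined, and the above shows it is formally normal.

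For the ``if'' part, assume $\ee\subseteq\dz{C_\phi}$ and $A:=C_\phi|_{\lin\ee}$ is formally normal. First, Lemma \ref{01.21.04.09}(2) gives \eqref{star}, so Lemma \ref{01.21.04.09}(4),(7),(8) yield $\ee\subseteq\dz{C_\phi^*}\cap\dz{C_\phi^*C_\phi}\cap\dz{C_\phi C_\phi^*}$; this settles the domain part of condition (ii) of Theorem \ref{21.01.11.01}. Next I would polarize the identity $\|Af\|=\|A^*f\|$ (valid for $f\in\lin\ee\subseteq\dz{A^*}$) to obtain $\is{Af}{Ag}=\is{A^*f}{A^*g}$ for all $f,g\in\lin\ee$. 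Since $A\subseteq C_\phi$ and $C_\phi^*\subseteq A^*$, and since $\lin\ee\subseteq\dz{C_\phi^*}$ forces $A^*h=C_\phi^*h$ for $h\in\lin\ee$, this identity reads $\is{C_\phi f}{C_\phi g}=\is{C_\phi^*f}{C_\phi^*g}$ on $\lin\ee$, i.e.\ $\is{C_\phi^*C_\phi f}{g}=\is{C_\phi C_\phi^*f}{g}$ (using the memberships just recorded). Density of $\lin\ee$ then gives $C_\phi^*C_\phi f=C_\phi C_\phi^*f$ for every $f\in\ee$, so condition (ii) of Theorem \ref{21.01.11.01} holds and $C_\phi$ is normal.

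I do not anticipate a serious obstacle: once Theorem \ref{21.01.11.01} and Lemma \ref{01.21.04.09} are available, the argument is essentially bookkeeping. The one point to state with care is the step $A^*h=C_\phi^*h$ for $h\in\lin\ee$; it is licit precisely because $\lin\ee\subseteq\dz{C_\phi^*}$ (a consequence of \eqref{star} and Lemma \ref{01.21.04.09}(4)) and $C_\phi^*\subseteq A^*$, so one never needs to identify $A^*$ itself, which could be strictly larger. (Alternatively one could bypass Theorem \ref{21.01.11.01} and show directly that the closure of $C_\phi|_{\lin\ee}$ is normal and equals $C_\phi$, but invoking the established characterization is cleaner.)
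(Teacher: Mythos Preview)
Your proposal is correct and follows essentially the same route as the paper: for the converse you use Lemma \ref{01.21.04.09} to place $\lin\ee$ in $\dz{C_\phi^*C_\phi}\cap\dz{C_\phi C_\phi^*}$, polarize the norm identity, and then invoke density together with Theorem \ref{21.01.11.01}. Your treatment is in fact a bit more careful than the paper's, since you make explicit the point $A^*h=C_\phi^*h$ for $h\in\lin\ee$ (via $C_\phi^*\subseteq A^*$ and $\lin\ee\subseteq\dz{C_\phi^*}$), which the paper uses implicitly.
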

\begin{proof}
If $C_\phi$ is normal, then $\ee\subseteq\dz{C_\phi}$ by Lemma \ref{01.21.04.09}. Hence, $C_\phi|_{\lin\ee}$ is formally normal.
For the proof of the reverse implication, assume that $\ee\subseteq\dz{C_\phi}$ and $C_\phi|_{\lin\ee}$ is formally normal.
Lemma \ref{01.21.04.09} implies that $\lin\ee\subseteq \dz{C_\phi^* C_\phi}\cap\dz{C_\phi C_\phi^*}$.
On the other hand, formal normality of $C_\phi|_{\lin\ee}$ yields $\|C_\phi f\|=\|C_\phi^* f\|$ for every $f\in\lin\ee$.
Hence, using polarization formula, we obtain $\is{C_\phi^* C_\phi f}{g}=\is{C_\phi C_\phi^* f}{g}$ for all $f,g\in\lin\ee$.
Since $\lin\ee$ is dense in $\ell^2(\mu)$ we see that \eqref{01.04.06.09} is satisfied. Applying Theorem \ref{21.01.11.01} completes the proof.
\end{proof}
Since symmetric operators are formally normal, by Propositions \ref{24.10.09.01} and \ref{24.01.11.01} we get the following (this result was recently generalized in \cite[Proposition B.1]{bud-jab-jun-sto-Subn}).
\begin{cor}
If $C_\phi$ is symmetric, then $C_\phi$ is selfadjoint, bounded and $C_\phi^2=I$.
\end{cor}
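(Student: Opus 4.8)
The plan is to push the symmetry hypothesis through Propositions~\ref{24.10.09.01} and~\ref{24.01.11.01}, and then determine exactly which two-sided backward shifts can be selfadjoint. First I would observe that a symmetric operator is densely defined and satisfies $C_\phi\subseteq C_\phi^*$, so $\dz{C_\phi}\subseteq\dz{C_\phi^*}$ and $\|C_\phi f\|=\|C_\phi^*f\|$ for every $f\in\dz{C_\phi}$; in particular $C_\phi$ is formally normal. Since $C_\phi$ is densely defined, Lemma~\ref{01.21.04.09}\,(3) gives $\ee\subseteq\dz{C_\phi}$, and then $C_\phi|_{\lin\ee}$ is formally normal as well (the adjoint of $C_\phi|_{\lin\ee}$ extends $C_\phi^*$ and coincides with it on $\lin\ee$ because $\lin\ee$ is dense in $\ell^2(\mu)$). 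Proposition~\ref{24.10.09.01} then shows that $C_\phi$ is normal. Normality forces $\dz{C_\phi}=\dz{C_\phi^*}$, which together with $C_\phi\subseteq C_\phi^*$ yields $C_\phi=C_\phi^*$, so $C_\phi$ is selfadjoint; this settles the first assertion.

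Next I would invoke Proposition~\ref{24.01.11.01}: $C_\phi$ is unitarily equivalent to $\bigoplus_{x\in\mathcal J}S(x)$, where each $S(x)$ is the two-sided backward shift in $\ell^2(\gamma(x))$ for a (finite or infinite) geometric sequence $\gamma(x)$ of positive reals, and since $C_\phi$ is selfadjoint every $S(x)$ must be selfadjoint. To exploit this I would record the elementary adjoint computation for a two-sided backward shift $S$ on $\ell^2(\gamma)$: in the infinite case $S^*\chi_{\{k\}}=\frac{\gamma(k)}{\gamma(k+1)}\,\chi_{\{k\!+\!1\}}$, while in the length-$n$ case a finite geometric sequence is constant, so $S$ is the cyclic permutation of an orthogonal basis of equal norms and $S^*=S^{-1}$. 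Comparing these with $S\chi_{\{k+1\}}=\chi_{\{k\}}$ and using that distinct vectors $\chi_{\{\cdot\}}$ are nonzero and pairwise orthogonal, the identity $S=S^*$ can hold only when the index set is a single point (then $S$ is the identity) or has exactly two points (then $S$ is the transposition). Thus, under the hypothesis, every orbit $\oo_\psi(x)$ has at most two elements.

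It then follows that each summand $S(x)$ is either the identity on a one-dimensional space or the flip on a two-dimensional space, so $\|S(x)\|=1$ and $S(x)^2$ is the identity for every $x$. Hence $\bigoplus_{x\in\mathcal J}S(x)$ is bounded with square equal to the identity, and transporting this back along the unitary equivalence shows that $C_\phi$ is bounded and $C_\phi^2=I$; combined with selfadjointness from the first paragraph this gives all three conclusions.

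The one step that is not pure bookkeeping is the adjoint computation for the two-sided shift together with the orthogonality observation that rules out orbits of size three or more; everything else is routine. As an alternative that avoids the shift picture, I could instead use Lemma~\ref{01.08.06.09} to get a bijective representative $\psi$ with $\psi=\mathrm{id}$ off $X_+$, impose $C_\psi=C_\psi^*$ in the explicit formula~\eqref{21.01.11.02}, and test against the functions in $\ee$; this reads off directly that $\psi^2=\mathrm{id}$ on $X_+$ and $\mu(\psi(x))=\mu(x)$ for $x\in X_+$, whence \eqref{rnatomic} gives $\hsf_\phi=1$ almost everywhere (so $C_\phi$ is bounded) and $C_\phi^2=C_{\psi^2}=I$.
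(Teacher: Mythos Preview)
Your argument is correct and follows exactly the route the paper indicates: symmetric $\Rightarrow$ formally normal, then Proposition~\ref{24.10.09.01} gives normality (hence selfadjointness via $C_\phi\subseteq C_\phi^*$ and $\dz{C_\phi}=\dz{C_\phi^*}$), and Proposition~\ref{24.01.11.01} reduces the remaining claims to the elementary analysis of which two-sided backward shifts are selfadjoint. The paper in fact states only the one-line justification ``since symmetric operators are formally normal, by Propositions~\ref{24.10.09.01} and~\ref{24.01.11.01} we get the following,'' so your write-up supplies precisely the details the paper leaves to the reader; your alternative via Lemma~\ref{01.08.06.09} and \eqref{21.01.11.02} is a valid shortcut as well.
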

Concluding the paper, we gather few results concerning unbounded quasinormal composition operators. The class was characterized in \cite[Proposition 8.1]{bud-jab-jun-sto-Comp}.
We fill up this characterization by proving that in $\ell^2$-spaces quasinormal composition operators are exactly those densely defined composition operators
which satisfy the condition for quasinormality of bounded operators (that this condition characterizes general quasinormal operators in Hilbert spaces was recently proved in \cite[Theorem 3.1]{jab-jun-sto-quasin}).
\begin{thm}\label{17.10.09.01}
The following conditions are equivalent$:$
\begin{enumerate}
\item[(i)] $C_\phi$ is quasinormal,
\item[(ii)] condition \eqref{star} is satisfied and
\begin{align}\label{quasiwarunek}
\frac{\miu{x}}{\mu(x)}=\frac{\miu{y}}{\mu(y)}, \quad y\in\phi^{-1}(\{x\}), x\in X_+,
\end{align}
\item[(iii)]    $C_\phi$ is densely defined and  $C_\phi^* C_\phi C_\phi=C_\phi C_\phi^* C_\phi$,
\item[(iv)]     $\ee\subseteq\dz{C_\phi^* C_\phi C_\phi}\cap\dz{C_\phi C_\phi^* C_\phi}$ and $C_\phi^* C_\phi C_\phi|_{\lin\ee}=C_\phi C_\phi^* C_\phi|_{\lin\ee}$.
\end{enumerate}
\end{thm}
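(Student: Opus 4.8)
\emph{Setup.} Under any of (i)--(iv) the operator $C_\phi$ is densely defined: for (ii) this is \eqref{star} together with Lemma~\ref{01.21.04.09}(2)--(3), for (iv) it follows from $\dz{C_\phi^*C_\phi C_\phi}\subseteq\dz{C_\phi}$, and for (i),(iii) it is part of the statement. The whole argument then rests on identifying $C_\phi^*C_\phi$ with the maximal multiplication operator $M_{\hsf_\phi}$ in $\ell^2(\mu)$ determined by $\hsf_\phi(x)=\miu{x}/\mu(x)$: by Lemma~\ref{01.21.04.09}(7), $C_\phi^*C_\phi$ acts diagonally on the orthogonal total set $\ee$, whence $C_\phi^*C_\phi\supseteq M_{\hsf_\phi}|_{\lin\ee}$; since the closure of $M_{\hsf_\phi}|_{\lin\ee}$ is the self-adjoint operator $M_{\hsf_\phi}$ and $C_\phi^*C_\phi$ is self-adjoint, the two coincide. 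In particular $\lin\ee$ is a core for $C_\phi^*C_\phi$, and the triple products in (iii)--(iv) take the form $C_\phi^*C_\phi C_\phi=M_{\hsf_\phi}C_\phi$, $C_\phi C_\phi^*C_\phi=C_\phi M_{\hsf_\phi}$. Finally, by Lemma~\ref{01.21.04.09}(1) I may work with the representative of $[\phi]$ that fixes $X\setminus X_+$ pointwise, so that $\phi^{-1}(\{x\})\subseteq X_+$ whenever $x\in X_+$ (this changes none of the four conditions). With these reductions I would prove the cycle $(ii)\Rightarrow(iii)\Rightarrow(iv)\Rightarrow(ii)$ and, separately, $(i)\Leftrightarrow(iii)$.

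\emph{The implication $(ii)\Rightarrow(iii)$.} Since $\phi(X_+)\subseteq X_+$ by Lemma~\ref{01.21.04.09}(1), applying \eqref{quasiwarunek} with $x$ replaced by $\phi(z)$ and $y$ by $z$ yields $\hsf_\phi\circ\phi=\hsf_\phi$ on $X_+$. I would then show that, under \eqref{quasiwarunek}, both $\dz{C_\phi M_{\hsf_\phi}}$ and $\dz{M_{\hsf_\phi}C_\phi}$ reduce to $\{f\in\ell^2(\mu):\sum_{x\in X}\hsf_\phi(x)^3|f(x)|^2\mu(x)<\infty\}$; the only point is that, for $f\in\ell^2(\mu)$, finiteness of $\sum_x\hsf_\phi(x)^3|f(x)|^2\mu(x)$ forces finiteness of $\sum_x\hsf_\phi(x)|f(x)|^2\mu(x)$ and $\sum_x\hsf_\phi(x)^2|f(x)|^2\mu(x)$, which is immediate upon splitting the sums over $\{\hsf_\phi\Le1\}$ and $\{\hsf_\phi>1\}$. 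On this common domain $C_\phi M_{\hsf_\phi}f=(\hsf_\phi f)\circ\phi=(\hsf_\phi\circ\phi)(f\circ\phi)=\hsf_\phi(f\circ\phi)=M_{\hsf_\phi}C_\phi f$, so $C_\phi M_{\hsf_\phi}=M_{\hsf_\phi}C_\phi$, which is (iii).

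\emph{The remaining implications.} For $(iii)\Rightarrow(iv)$: $\chi_{\{x\}}\in\dz{M_{\hsf_\phi}}$ with $M_{\hsf_\phi}\chi_{\{x\}}=\hsf_\phi(x)\chi_{\{x\}}\in\dz{C_\phi}$ (by \eqref{star}), so $\ee\subseteq\dz{C_\phi M_{\hsf_\phi}}=\dz{M_{\hsf_\phi}C_\phi}$ by (iii), and restricting (iii) to $\lin\ee$ gives (iv). For $(iv)\Rightarrow(ii)$: $\ee\subseteq\dz{C_\phi^*C_\phi C_\phi}$ gives \eqref{star} (again via Lemma~\ref{01.21.04.09}) and $\chi_{\phi^{-1}(\{x\})}=C_\phi\chi_{\{x\}}\in\dz{C_\phi^*C_\phi}=\dz{M_{\hsf_\phi}}$; evaluating the identity of (iv) at $\chi_{\{x\}}$ ($x\in X_+$) gives $\hsf_\phi(x)\chi_{\phi^{-1}(\{x\})}=\sum_{y\in\phi^{-1}(\{x\})}\hsf_\phi(y)\chi_{\{y\}}$, and comparing coefficients (all $y$ lie in $X_+$, so the $\chi_{\{y\}}$ are nonzero and orthogonal) forces $\hsf_\phi(y)=\hsf_\phi(x)$ for $y\in\phi^{-1}(\{x\})$, i.e.\ \eqref{quasiwarunek}. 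For $(i)\Leftrightarrow(iii)$: $C_\phi$ is automatically closed, and (iii) is exactly the identity $A^*AA=AA^*A$ which characterizes quasinormality of a closed densely defined operator by \cite[Theorem~3.1]{jab-jun-sto-quasin}; alternatively $(i)\Leftrightarrow(ii)$ is the discrete specialization of \cite[Proposition~8.1]{bud-jab-jun-sto-Comp}.

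\emph{Main obstacle.} I expect the only genuinely delicate part to be the setup and the implication $(ii)\Rightarrow(iii)$: establishing rigorously that $C_\phi^*C_\phi=M_{\hsf_\phi}$ (so that $\lin\ee$ is a core and the triple products become $M_{\hsf_\phi}C_\phi$ and $C_\phi M_{\hsf_\phi}$), and then that the two triple-product domains $\dz{C_\phi M_{\hsf_\phi}}$ and $\dz{M_{\hsf_\phi}C_\phi}$ really do coincide once \eqref{quasiwarunek} is imposed. After that, $(iii)\Rightarrow(iv)\Rightarrow(ii)$ are routine comparisons of coefficients against $\ee$ via Lemma~\ref{01.21.04.09}, and $(i)\Leftrightarrow(iii)$ is the cited characterization.
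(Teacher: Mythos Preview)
Your proposal is correct and takes a genuinely different, cleaner route from the paper for the hardest implication.

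The paper proves $(i)\Leftrightarrow(ii)$ by citing \cite[Proposition~8.1]{bud-jab-jun-sto-Comp}, then $(iii)\Rightarrow(iv)\Rightarrow(ii)$ by direct comparison on $\ee$ (as you do), and separately $(ii)\Rightarrow(iv)$ and $(ii)\Rightarrow(iii)$. Its argument for $(ii)\Rightarrow(iii)$ is indirect: it first computes $\dz{C_\phi C_\phi^*C_\phi}=\dz{C_\phi^*C_\phi C_\phi}$ using the cited domain description of $C_\phi^*C_\phi$, then shows $C_\phi C_\phi^*C_\phi$ is closable by exhibiting $C_\phi^*C_\phi C_\phi^*$ inside its adjoint via von Neumann's theorem, and finally proves $C_\phi^*C_\phi C_\phi\subseteq\overline{C_\phi^*C_\phi C_\phi|_{\lin\ee}}$ by an explicit finite-truncation approximation, combining these via the already-proved $(ii)\Rightarrow(iv)$.

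Your approach bypasses all of the closability/approximation machinery: you first identify $C_\phi^*C_\phi$ with the maximal multiplication operator $M_{\hsf_\phi}$ (the paper implicitly has this through the cited domain formula, but never writes it), observe that \eqref{quasiwarunek} is exactly $\hsf_\phi\circ\phi=\hsf_\phi$ on $X_+$, and then check directly that both $\dz{C_\phi M_{\hsf_\phi}}$ and $\dz{M_{\hsf_\phi}C_\phi}$ collapse to $\{f:\sum\hsf_\phi^3|f|^2\mu<\infty\}$ with identical action. This is shorter and more transparent. You also invoke \cite[Theorem~3.1]{jab-jun-sto-quasin} for $(i)\Leftrightarrow(iii)$ directly, whereas the paper only mentions that result in the preamble and instead proves $(i)\Leftrightarrow(ii)$ by citation and $(ii)\Leftrightarrow(iii)$ by hand; your choice is legitimate since the paper already acknowledges that reference. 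The paper's approach, in exchange, makes the role of $\lin\ee$ as an effective core for the triple products explicit, which is of some independent interest.
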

\begin{proof}
(i)$\Leftrightarrow$(ii) This follows from \cite[Proposition 8.1]{bud-jab-jun-sto-Comp} by \eqref{rnatomic}.

(iii)$\Rightarrow$(iv) Evident, since by Lemma \ref{01.21.04.09} we have $\ee\subseteq\dz{C_\phi C_\phi^* C_\phi}$.

(iv)$\Rightarrow$(ii) Clearly, by Lemma \ref{01.21.04.09}, for every $x,y\in X_+$ we have
    \begin{align}\label{01.11.09.01}
        \big(C_\phi^*C_\phi C_\phi \chi_{\{x\}}\big)(y)&=\frac{\mu\circ\phi^{-1}(y)}{\mu(y)} \chi_{\phi^{-1}(x)}(y)\\
        \label{01.11.09.02}
        \Big(C_\phi C_\phi^* C_\phi \chi_{\{x\}}\Big)(y)&=\tfrac{\miu{x}}{\mu(x)}\, \chi_{\phi^{-1}(\{x\})}(y)
    \end{align}
By comparing right-hand sides in the display above we obtain (ii).

(ii)$\Rightarrow$(iv) By Lemma \ref{01.21.04.09}, we know that $\ee\subseteq\dz{C_\phi C_\phi^* C_\phi}$.
Now we show that $\ee\subseteq\dz{C_\phi^* C_\phi C_\phi}$.
Indeed, let $x\in X_+$. Then we have
\begin{align*}
    \frac{\big(\miu{x}\big)^{2}}{\mu(x)}
    &=
    \frac{\miu{x}}{\mu(x)} \sum_{y\in\phi^{-1}(\{x\})} \mu(y)\\
    &=
    \sum_{y\in\phi^{-1}(\{x\})}\frac{\miu{y}}{\mu(y)}\, \mu(y)\\
    &=
    \sum_{y\in\phi^{-1}(\{x\})} \miu{y}
    =
    \mu\circ\phi^{-2}(x),
    \end{align*}
which, together with $\ee\in\dz{C_\phi}$, \eqref{rnatomic} and \cite[Display (3.6)]{bud-jab-jun-sto-Comp}, yields $\chi_{\{x\}}\in\dz{C_\phi^2}$.
Moreover, by the Cauchy-Schwarz inequality and \eqref{quasiwarunek}, for any $f\in\dz{C_\phi}$ we have
    \begin{align*}
    |\is{C_\phi f}{\chi_{\phi^{-2}(\{x\})}}|
    &\Le
    \sum_{y\in\phi^{-2}(\{x\})} |f(\phi(y))|\, \mu(y)
    =
    \sum_{z\in\phi^{-1}(\{x\})} |f(z)|\, \miu{z}\\
    &=
    \sum_{z\in\phi^{-1}(\{x\})} |f(z)|\, \frac{\miu{z}}{\mu(z)}\, \mu(z)\\
    &\Le
    \|f\|\,\bigg(\sum_{z\in\phi^{-1}(\{x\})} \bigg(\frac{\miu{z}}{\mu(z)}\bigg)^2\,\mu(z)\bigg)^{1/2}\\
    &=
    \|f\|\,\bigg(\frac{\miu{x}}{\mu(x)}\bigg)^2 \big(\miu{x}\big)^{1/2}.
    \end{align*}
This proves that $C_\phi^2\chi_{\{x\}}\in\dz{C_\phi^*}$. Since $x\in X_+$ can be arbitrary chosen, we obtain that $\ee\in\dz{C_\phi^* C_\phi C_\phi}$.
Now, using \eqref{01.11.09.01}, \eqref{01.11.09.02} and \eqref{quasiwarunek} we see that $C_\phi^*C_\phi C_\phi|_{\lin\ee}$ and $C_\phi C_\phi^*C_\phi|_{\lin\ee}$ coincide.

(ii)$\Rightarrow$(iii) $\overline{\dz{C_\phi}}=\ell^2(\mu)$ by Lemma \ref{01.21.04.09}. By \cite[Proposition 8.1\,(i)]{bud-jab-jun-sto-Comp} we know that
    \begin{align*}
    \dz{C_\phi^*C_\phi}=\Big\{f\in\ell^2(\mu)\colon\sum_{x\in X_+} \tfrac{|f(x)|^2 \big(\miu{x}\big)^2}{\mu(x)}<\infty\Big\}.
    \end{align*}
This, \eqref{17.01.11.01} and \eqref{quasiwarunek} imply that
    \begin{align*}
    \dz{C_\phi C_\phi^*C_\phi}&=\Big\{f\in\ell^2(\mu)\colon\sum_{x\in X_+}  \tfrac{|f(x)|^2 \big(\miu{x}\big)^3}{(\mu(x))^2}<\infty\Big\},\\
    \dz{C_\phi^*C_\phi C_\phi}&=\Big\{f\in\ell^2(\mu)\colon\sum_{x\in X_+}  \tfrac{|f(\phi(x))|^2 \big(\miu{x}\big)^2}{\mu(x)}<\infty\Big\}.
    \end{align*}
Therefore, applying \eqref{quasiwarunek} again, we get that
    \begin{align*}
    \dz{C_\phi^*C_\phi C_\phi}
    &=
    \Big\{f\in\ell^2(\mu)\colon\sum_{x\in X}^\infty |f(\phi(x))|^2\tfrac{\big(\miu{\phi(x)}\big)^2}{\big(\mu(\phi(x)\big)^2}\,\mu(x)<\infty\Big\}\\
    &=
    \Big\{f\in\ell^2(\mu)\colon\sum_{x\in X}^\infty |f(x)|^2\tfrac{\big(\miu{x}\big)^2}{\big(\mu(x)\big)^2}\tfrac{\miu{x}}{\mu(x)}\,\mu(x)<\infty\Big\}\\
    &=
    \dz{C_\phi C_\phi^*C_\phi}.
    \end{align*}
By Lemma \ref{01.21.04.09} we know that $C_\phi^*(\lin\ee)\subseteq\lin\ee$ and thus, again by Lemma \ref{01.21.04.09}, $\lin\ee\subseteq \dz{C_\phi^*C_\phi C_\phi^*}$. Since $C_\phi^*C_\phi$ is selfadjoint, we have $C_\phi^*C_\phi C_\phi^*\subseteq\big(C_\phi C_\phi^* C_\phi\big)^*$. Von Neumann theorem (cf. \cite[Theorem 3.3.7]{bir-sol}), we see that $C_\phi C_\phi^* C_\phi$ is closable.
This and implication (ii)$\Rightarrow$(iv) yields the following inclusions
    \begin{align}\label{inkluzje}
    \overline{C_\phi^*C_\phi C_\phi|_{\lin\ee}}
    =
    \overline{C_\phi C_\phi^* C_\phi|_{\lin\ee}}
    \subseteq
    \overline{C_\phi C_\phi^* C_\phi}.
    \end{align}
Now, we will prove that $C_\phi^*C_\phi C_\phi\subseteq\overline{C_\phi^*C_\phi C_\phi|_{\lin\ee}}$.
Indeed, let $f\in\dz{C_\phi^*C_\phi C_\phi}$. Then, by \eqref{01.11.09.01} we have
    \begin{align*}
    \big(C_\phi^*C_\phi C_\phi f\big)(x)=\frac{\miu{x}}{\mu(x)}\,f(\phi(x)), \quad x\in X.
    \end{align*}
Fix $\varepsilon>0$. Then there is a finite subset $Y$ of $X$ such that
    \begin{align}\label{final}
    \sum_{x\in X\setminus Y}^\infty \big|\frac{\miu{x}}{\mu(x)}\,f(\phi(n))\big|^2 \mu(x)\Le\varepsilon,
    \quad
    \sum_{x\in X\setminus Y}^\infty |f(x)|^2 \mu(x)\Le\varepsilon
    \end{align}
Set $Z=Y\cup\phi(Y)$ and $\tilde f=\chi_{Z}f$. Then $\tilde f\in\lin\ee$. Moreover, by \eqref{final}, we have
    \begin{align*}
    \|f-\tilde f\|\Le\varepsilon,
    \quad
    \|C_\phi^*C_\phi C_\phi \tilde f-C_\phi^*C_\phi C_\phi f\|\Le \varepsilon
    \end{align*}
Since $\varepsilon$ can be arbitrary, this yields $C_\phi^*C_\phi C_\phi\subseteq\overline{C_\phi^*C_\phi C_\phi|_{\lin\ee}}$. As a consequence, by \eqref{inkluzje}, we get the inclusion
    \begin{align*}
    C_\phi^*C_\phi C_\phi
    \subseteq
    \overline{C_\phi C_\phi^* C_\phi}.
    \end{align*}
This together with $\dz{C_\phi^*C_\phi C_\phi}=\dz{C_\phi C_\phi^*C_\phi}$ imply that $C_\phi C_\phi^* C_\phi=C_\phi^*C_\phi C_\phi$. Hence we get (iii) and the proof is complete.
\end{proof}
\begin{rem}
Recently, new characterization of quasinormality of composition operators in $L^2$-spaces was invented (see \cite[Theorem 3.1]{bud-jab-jun-sto-multiplicative}). The main ingredient of it is the equivalence between quasinormality of $C_\phi$ and condition $\mathsf{h}_\phi^n=\mathsf{h}_{\phi^n}$ a.e. $[\mu]$ for every $n\in\mathbb{N}$. In the context of composition operators in $\ell^2$-spaces the characterization reads as follows: $C_\phi$ is quasinormal if and only if $\big(\frac{\mu\circ \phi^{-1}(x)}{\mu(x)}\big)^n=\frac{\mu\circ \phi^{-n}(x)}{\mu(x)}$ for every $x\in X_+$ and every $n\in\N$. The proof
\end{rem}
\begin{rem}
It is worth pointing out that for quasinormal $C_\phi$ its symbol $\phi$ must be almost surjective, i.e., $\mu(X\setminus\phi(X))=0$ (use \eqref{quasiwarunek}), which can be easily deduced from Theorem \ref{17.10.09.01}. This in turn implies, by Lemma \ref{01.21.04.09}, existence of surjective representative of $[\phi]$. As a consequence a quasinormal composition operator $C_\phi$ must be automatically normal whenever a restriction of $\phi$ to $X_+$ is injective. These results turn out to be valid for hyponormal operators (the class of quasinormal operators is included in that of hyponormal operators), which follows from \cite[Proposition 6.2 and Corollary 6.3]{bud-jab-jun-sto-Comp}.
\end{rem}

\end{document}